\documentclass[12pt]{article}
\usepackage{amsmath, amsthm, amssymb}

\newtheorem{theorem}{Theorem}
\newtheorem{remark}{Remark}
\newtheorem{corollary}{Corollary}
\newtheorem{lemma}{Lemma}
\newtheorem{example}{Example}

\newtheorem{assumption}{Assumption}

\newcommand{\bard}{{\bar\delta}}

\numberwithin{equation}{section}

\parskip 0.1cm
\setlength{\textwidth}{16cm}
\setlength{\oddsidemargin}{0cm}
\setlength{\topmargin}{-15mm}
\setlength{\textheight}{23cm}

 \begin{document}
 
 \title{Too much regularity may force too much uniqueness}

\author{
Martin Stynes\thanks{\texttt{m.stynes@csrc.ac.cn} The research of this author is supported in part by the National Natural Science Foundation of China under grant 91430216.}\\[2pt]
Applied and Computational Mathematics Division,\\ Beijing Computational Science Research Center,\\ 
Haidian District, Beijing 100193, China.
}

\maketitle

 \begin{abstract}

Time-dependent fractional-derivative problems $D_t^\delta u + Au = f$ are considered, where $D_t^\delta$ is a Caputo fractional derivative of order $\delta\in (0,1)\cup (1,2)$ and~$A$ is a classical elliptic operator, and appropriate boundary and initial conditions are applied. The regularity of solutions to this class of problems is discussed, and it is shown that assuming more regularity than is generally true---as many researchers do---places a surprisingly severe restriction on the problem.

 \medskip

{\it MSC 2010\/}: Primary 35R11;
                  Secondary 35B65

 \smallskip

{\it Key Words and Phrases}: fractional heat equation, fractional wave equation, regularity of solution.

 \end{abstract}

 \section{The problem and the singularity in its solution} \label{sec:probsing}
 
 \subsection{Introduction and problem statement}\label{sec:intro}
Time-dependent problems with a fractional  temporal derivative of order $\delta\in (0,1)\cup (1,2)$ have attracted much attention in recent years, since these problems arise in several models of physical processes---see the references in~\cite{JLPZ15,LiXU10,McL10,MK00,SY11}. In the present paper we shall discuss the regularity of solutions in this class and a surprising consequence of assuming more regularity of the solution than is true in general.

Let $\Omega$ be a bounded domain in $\mathbb{R}^n$ for some $n\ge 1$. Let $\bar \Omega$ and $\partial\Omega$ denote its closure and boundary. Let $\delta\in (0,1)\cup (1,2)$. Set
\[
\bard =\begin{cases}
	1 &\text{if } 0<\delta<1, \\
	2 &\text{if } 1<\delta<2.
	\end{cases}
\]
Given a suitable function $g(x,t)$ defined on $\Omega\times [0,T]$ for some $T>0$, define \cite{Diet10} the \emph{Caputo fractional derivative} $D^\delta_t$  by   
$$
D_t^\delta g(x,t) := \frac1{\Gamma(\bard-\delta)}\int_{s=0}^t (t-s)^{\bard-\delta-1}\,\frac{\partial^\bard g(x,s)}{\partial s^\bard}\,ds  \quad\text{for } x\in\Omega, \, 0< t \le T.
$$

In this paper we shall consider the initial-boundary value problem 
\begin{subequations} \label{prob}
\begin{align}
Lu:= &D_t^\delta u-\sum_{i,j=1}^n p_{ij}(x,t)  \frac{\partial^2 u}{\partial x_i\partial x_j} + \sum_{i=1}^n q_i(x,t)  \frac{\partial u}{\partial x_i} + r(x,t) u=f(x,t)\label{proba}
\intertext{for $(x,t)\in Q:=\Omega\times(0,T]$, with}
u(x,t)&= \psi(x,t)\quad\text{for }(x,t) \in \partial\Omega \times (0,T], \label{probb} \\
u(x,0)&=\phi_0(x) \quad\text{for } x\in\bar\Omega, \label{probc} \\
u_t(x,0) &= \phi_1(x) \quad\text{for } x\in\Omega, \label{probd}
\end{align}
\end{subequations}
where the initial condition~\eqref{probd} is applied only when $1< \delta <2$. We assume that  the operator 
\[
w\mapsto \sum_{i,j=1}^n p_{ij}(x,t)  \partial^2 w/\partial x_i\partial x_j + \sum_{i=1}^n q_i(x,t)  \partial w/\partial x_i + r(x,t) w
\]
 is uniformly elliptic on~$Q$,  and that the functions $p_{ij},q_i,r,\psi, \phi_0$ and $\phi_1$ are continuous on the closures of their domains. We also assume the minimal amount of compatibility between the initial and boundary conditions so that the solution~$u$ is continuous on $\bar Q:= \bar\Omega \times [0,T]$: 
\begin{equation}\label{compatibility}
\phi_0(x) = \psi(x,0)  \ \text{ for all }x\in\partial\Omega.
\end{equation}


When $0<\delta <1$, the problem \eqref{prob} is a fractional-derivative generalisation of classical parabolic problems such as the heat equation, while for $1<\delta <2$ it generalises classical second-order hyperbolic problems such as the wave equation. Problems of the form \eqref{prob} have been considered in a huge number of papers in the research literature.

Under suitable smoothness and compatibility hypotheses on the data of \eqref{prob}, one can show existence and uniqueness of solutions to this problem in various spaces; see, e.g., \cite{CLP06,JLZ14,LiXU10,Luchko12,McL10,SY11}. It is not necessary to go into the details of these results here, except to note that they show that in general the solution~$u$ of \eqref{prob} is \emph{not} smooth in the closed domain~$\bar Q$, even when the data of the problem are smooth:
 when $0< \delta < 1$, in general~$u_t$ blows up at $t=0$, and when $1< \delta < 2$, in general  $u_{tt}$  blows up at $t=0$.  Example~\ref{exa:1} below exhibits the singularity at $t=0$ that is typical of solutions of~\eqref{prob}.
 
 Throughout the paper we are interested only in classical solutions of \eqref{prob}, i.e., functions $u$ whose derivatives $D_t^\delta u, \partial u/\partial x_i$ and $\partial^2 u/\partial x_i\partial x_j$ exist at all points in~$Q$ and satisfy \eqref{proba}  and its initial and boundary conditions pointwise.

\emph{Notation.}
Let $C^k(S)$, for any domain $S$, denote the space of real-valued functions whose derivatives of order $0,1,\dots, k$ are continuous on $S$. When $k=0$ we write $C(S)$.

\subsection{A typical example}
The Mittag-Leffler function $E_\alpha$ is defined \cite[Chapter 4]{Diet10} by
\[
E_{\alpha} (z) := \sum _{k=0}^\infty \frac{z^k}{\Gamma (\alpha k +1)}\,. 
\]
This series converges uniformly and absolutely for $\alpha >0$.
Using \cite[Appendix D]{Diet10} to differentiate the series term by term, which is easily justified, one sees that 
\begin{equation}\label{DdE}
D_t^\delta E_{\delta} (t^\delta) = E_{\delta} (t^\delta)\ \text{ for }t>0.
\end{equation}
Likewise, a simple differentiation yields
\begin{equation}\label{D1E}
\frac{d}{dt} E_{\delta} (t^\delta) = \sum_{k=1}^\infty \frac{t^{k\delta-1}}{\Gamma (\delta k)}\ \text{ for }t>0.
\end{equation}

\begin{example} \label{exa:1}
Consider the fractional heat/wave equation   
\[
D_t^\delta v - \partial^2 v/\partial x^2 = 0\ \text{ for } (x,t)\in (0,\pi)\times(0,T]
\]
with boundary conditions $v(0,t) = v(\pi,t) =0$ and the initial condition $v(x,0) = \sin x$ when $0<\delta<1$, and when $1<\delta <2$ the additional initial condition $v_t(x,0) = 0$.  From~\eqref{DdE} (and~\eqref{D1E} when $1<\delta<2$) it follows that the solution of this initial-boundary value problem is 
\[v(x,t)=E_{\delta}(-t^\delta)\sin x \ \text{ for } (x,t)\in[0,\pi]\times[0,T].
\] 
Hence 
\[
v_t(x,t) =  \sum_{k=1}^\infty \frac{(-1)^kt^{\delta k-1}}{\Gamma (\delta k)} \sin x\quad \text{and}\quad 
	v_{tt}(x,t) =  \sum_{k=1}^\infty \frac{(-1)^kt^{\delta k-2}}{\Gamma (\delta k -1)} \sin x
\]
for $(x,t)\in [0,\pi]\times(0,T]$.
If $x\in (0,\pi)$, it follows that for $0<\delta<1$, $v_t(x,t) \sim t^{\delta-1}$ as $t\to 0^+$, while for $1<\delta<2$ one has $v_{tt}(x,t) \sim t^{\delta-2}$ as $t\to 0^+$. These singularities in the temporal derivatives at $t=0$ are typical of solutions to the general problem~\eqref{prob}.
\end{example}

Note that all the data of Example~\ref{exa:1} are smooth; the cause of the singularity is the fractional derivative in the differential operator.

When the data of \eqref{prob} are smooth, one expects the pure spatial derivatives of the solution of~\eqref{prob} to be smooth globally, as can be seen in Example~\ref{exa:1}; only the temporal derivatives exhibit singularities at $t=0$.

\begin{remark}
Despite the presence of singularities of the temporal derivatives in typical solutions of \eqref{prob}, most papers dealing with the numerical analysis of finite difference methods for solving \eqref{prob} make the a priori assumption that higher-order temporal derivatives of the solution are smooth on the closed domain~$\bar Q$, in order to use Taylor expansions in their truncation error analyses. 
\end{remark}

\subsection{Purpose of paper}
The purpose of the present paper is to point out the severe consequence of assuming that the temporal derivative singularities described above are not present, i.e., the effect of assuming that $u_t\in C(\bar Q)$  when $0< \delta < 1$, or that $u_{tt}\in \bar Q$  when $1< \delta < 2$. Our main results (Theorem~\ref{th:main} and Corollary~\ref{cor:2}) will show that such solutions are a very restricted subclass of \eqref{prob}.

\section{Assuming more regularity in the solution}\label{sec:morereg}

\setcounter{section}{2}
\setcounter{equation}{0}\setcounter{theorem}{0}

In this section we shall assume more temporal regularity of $u$ than is present in Example~\ref{exa:1} and examine the effect of this arbitrary assumption.

The key to our analysis is the following basic result (see, e.g.,~\cite[Lemma 3.11]{Diet10}). We include its short elementary proof for completeness.

\begin{lemma}\label{lem:1}
Let $g\in C^\bard[0,T]$. Then 
\[
\lim_{t\to 0^+} D_t^{\delta}g(t) = 0.
\]
\end{lemma}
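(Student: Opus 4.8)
The plan is to estimate $D_t^\delta g(t)$ directly from its integral definition, using the hypothesis $g\in C^{\bard}[0,T]$ to control $g^{(\bard)}$. Write
\[
D_t^\delta g(t) = \frac{1}{\Gamma(\bard-\delta)}\int_0^t (t-s)^{\bard-\delta-1}\, g^{(\bard)}(s)\,ds.
\]
Since $g^{(\bard)}$ is continuous on the compact interval $[0,T]$, it is bounded there, say $|g^{(\bard)}(s)|\le M$ for all $s\in[0,T]$. Then I would bound the absolute value of the integral by pulling $M$ out and integrating the kernel: $\int_0^t (t-s)^{\bard-\delta-1}\,ds = t^{\bard-\delta}/(\bard-\delta)$, which is finite because $\bard-\delta>0$ in both cases $\delta\in(0,1)$ (so $\bard=1$, $\bard-\delta=1-\delta>0$) and $\delta\in(1,2)$ (so $\bard=2$, $\bard-\delta=2-\delta>0$).

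Carrying this out gives
\[
|D_t^\delta g(t)| \le \frac{M}{\Gamma(\bard-\delta)}\cdot\frac{t^{\bard-\delta}}{\bard-\delta} = \frac{M\, t^{\bard-\delta}}{\Gamma(\bard-\delta+1)}.
\]
Since $\bard-\delta>0$, the right-hand side tends to $0$ as $t\to 0^+$, and the result follows. The only mild subtlety is making sure the improper integral defining $D_t^\delta g(t)$ actually converges for each fixed $t>0$ — but this is exactly the same kernel estimate, since $(t-s)^{\bard-\delta-1}$ is integrable near $s=t$ precisely because the exponent $\bard-\delta-1 > -1$.

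I do not anticipate any real obstacle here: the argument is a one-line crude bound. If one wanted a sharper statement one could instead integrate by parts (when $g^{(\bard)}$ is, say, merely integrable rather than continuous) or split the integral, but for the stated hypothesis $g\in C^{\bard}[0,T]$ the boundedness of $g^{(\bard)}$ plus integrability of the weakly singular kernel is all that is needed. The one point worth stating explicitly in the write-up is that $\bard-\delta>0$ holds for every admissible $\delta\in(0,1)\cup(1,2)$, since that positivity is what drives both the convergence of the integral and the vanishing of the bound.
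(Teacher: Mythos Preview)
Your proposal is correct and is essentially the same argument as the paper's own proof: bound $g^{(\bard)}$ by a constant using continuity on $[0,T]$, integrate the weakly singular kernel explicitly to get $\frac{M\,t^{\bard-\delta}}{\Gamma(\bard-\delta+1)}$, and let $t\to 0^+$. Your additional remarks about why $\bard-\delta>0$ and why the defining integral converges are fine but not strictly needed in the write-up.
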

\begin{proof}
For any $t\in (0,T]$, 
\[
D_t^{\delta}g(t) =  \frac1{\Gamma(\bard-\delta)} \int_{s=0}^t (t-s)^{\bard-\delta-1} \,\frac{d^{\bard}g(s)}{ds^\bard}\, ds.
\]
But $g\in C^\bard[0,T]$ implies that $|d^\bard g(s)/ds^\bard|\le C$ for $0\le s\le T$ and some constant~$C$. Hence
\[
\left| D_t^{\bard-\delta}g(t) \right| \le  \frac{C}{\Gamma(\bard-\delta)} \int_{s=0}^t (t-s)^{\bard-\delta-1} \, ds
	= \frac{Ct^{\bard-\delta}}{\Gamma(\bard-\delta+1)}
	\to 0 \text{ as } t\to 0^+.
\]
\end{proof}

By considering the function $g(t) = t^{\delta}$, one sees easily that Lemma~\ref{lem:1} is no longer true if one assumes only that $g\in C^\bard(0,T]$.

The next result now follows immediately.

\begin{corollary}\label{cor:1}
In \eqref{prob}, assume that $\partial^\bard u(x,t)/\partial t^\bard$ is continuous on~$\bar Q$. Then 
\[
\lim_{t\to 0^+}D_t^\delta u(x,t)=0\quad\text{for each } x\in\Omega.
\]
\end{corollary}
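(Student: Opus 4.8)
The plan is to deduce the corollary from Lemma~\ref{lem:1} by freezing the spatial variable, since the Caputo operator $D_t^\delta$ differentiates only in~$t$. First I would fix an arbitrary $x\in\Omega$ and set $g(t):=u(x,t)$ for $t\in[0,T]$. The point is then that, for this frozen~$x$, the function $t\mapsto D_t^\delta u(x,t)$ is exactly $D_t^\delta g(t)$ in the one-variable sense of Lemma~\ref{lem:1} (the definition of $D_t^\delta$ integrates $\partial^\bard g(x,s)/\partial s^\bard$, which for fixed $x$ is just $g^{(\bard)}(s)$), so it suffices to verify $g\in C^\bard[0,T]$ and then quote that lemma.

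So the one thing to check is that the hypothesis ``$\partial^\bard u/\partial t^\bard$ continuous on $\bar Q$'' really delivers $g\in C^\bard[0,T]$. When $0<\delta<1$ we have $\bard=1$, and this is immediate: $u_t\in C(\bar Q)$ gives $g'\in C[0,T]$, while $u\in C(\bar Q)$---which holds by the compatibility condition~\eqref{compatibility}---gives $g\in C[0,T]$. When $1<\delta<2$ we have $\bard=2$ and must also see that $g'=u_t(x,\cdot)$ is continuous up to $t=0$; this follows because $u$ is a classical solution (so $u_t$ exists on $Q$), $u_{tt}(x,\cdot)$ is continuous and hence bounded near $t=0$, and the initial condition~\eqref{probd} fixes $u_t(x,0)=\phi_1(x)$, so that $g'(t)=\phi_1(x)+\int_0^t u_{tt}(x,s)\,ds$ on $[0,T]$; continuity of $g$ then follows as before. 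In either case $g\in C^\bard[0,T]$.

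With $g\in C^\bard[0,T]$ in hand, Lemma~\ref{lem:1} gives $\lim_{t\to0^+}D_t^\delta g(t)=0$, that is, $\lim_{t\to0^+}D_t^\delta u(x,t)=0$; since $x\in\Omega$ was arbitrary, the corollary follows. I expect no genuine obstacle here---the statement is essentially a pointwise-in-$x$ reading of Lemma~\ref{lem:1}---and the only place that needs a sentence of care is the passage from the stated hypothesis on the top temporal derivative to membership in $C^\bard[0,T]$, which is where the classical-solution framework and the prescribed initial and compatibility data are used.
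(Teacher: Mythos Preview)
Your approach is correct and coincides with the paper's: the author simply states that the corollary ``follows immediately'' from Lemma~\ref{lem:1}, meaning exactly your argument of freezing $x\in\Omega$ and applying the lemma to $g(t)=u(x,t)$. Your additional paragraph checking that $g\in C^\bard[0,T]$ (in particular that $u_t(x,\cdot)$ extends continuously to $t=0$ when $\bard=2$) supplies detail the paper omits; it is sound and is the only place where any care is needed, so your write-up is, if anything, more complete than the original.
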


Let $C^{2, \bard}(\bar Q)$ denote the space of functions $w\in C(\bar Q)$ for which $\partial w/\partial x_i$, $\partial^2 w/\partial x_i\partial x_j \in C(\bar Q)$ for $1\le i,j \le n$ and $\partial^k w/\partial t^k \in C(\bar Q)$ for $k=1,\bard$.

For functions $w\in C^2(\Omega)$, define the differential operator $L_0$ by
\[
L_0w(x) := -\sum_{i,j=1}^n p_{ij}(x,0)  \frac{\partial^2 w}{\partial x_i\partial x_j} + \sum_{i=1}^n q_i(x,0)  \frac{\partial w}{\partial x_i} + r(x,0) w\ \text{ for }x\in\Omega.
\]

We can now present our main result. In it the key assumption is the continuity of the temporal derivative $\partial^\bard u(x,t)/\partial t^\bard$ for $0\le t\le T$, not just for $0<t \le T$.

\begin{theorem}\label{th:main}
Suppose that the solution $u$ of \eqref{prob}  lies in $C^{2,\bard}(\bar Q)$. Then the initial value~$\phi_0(x)$ of $u$ must satisfy the equation $L_0\phi_0 = f$ on $\Omega$.
\end{theorem}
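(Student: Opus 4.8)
The plan is to evaluate the differential equation \eqref{proba} in the limit $t \to 0^+$ and extract the claimed identity from the surviving terms. Since $u \in C^{2,\bard}(\bar Q)$, Corollary~\ref{cor:1} applies and tells us that $D_t^\delta u(x,t) \to 0$ as $t \to 0^+$ for each $x \in \Omega$. So the first step is simply to write \eqref{proba} at a fixed interior point $x$, namely
\[
D_t^\delta u(x,t) - \sum_{i,j=1}^n p_{ij}(x,t)\frac{\partial^2 u}{\partial x_i \partial x_j}(x,t) + \sum_{i=1}^n q_i(x,t)\frac{\partial u}{\partial x_i}(x,t) + r(x,t)u(x,t) = f(x,t),
\]
and then let $t \to 0^+$.

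The second step is to pass to the limit term by term. The first term vanishes by Corollary~\ref{cor:1}. Each of the remaining terms on the left is a product of a coefficient ($p_{ij}$, $q_i$, or $r$), which is continuous on $\bar Q$ by hypothesis and hence converges to its value at $(x,0)$, with a spatial derivative of $u$ of order at most two; since $u \in C^{2,\bard}(\bar Q)$, those spatial derivatives are continuous on $\bar Q$ and converge to their values at $(x,0)$. Likewise $f$ is continuous, so $f(x,t) \to f(x,0)$. Taking the limit therefore yields
\[
-\sum_{i,j=1}^n p_{ij}(x,0)\frac{\partial^2 u}{\partial x_i \partial x_j}(x,0) + \sum_{i=1}^n q_i(x,0)\frac{\partial u}{\partial x_i}(x,0) + r(x,0)u(x,0) = f(x,0),
\]
which, since $u(x,0) = \phi_0(x)$ and the spatial derivatives of $u$ at $t=0$ agree with those of $\phi_0$, is precisely $L_0\phi_0(x) = f(x,0)$ for every $x \in \Omega$. (Here one should note $f$ evaluated at $t=0$ is what is meant by "$f$ on $\Omega$" in the statement; strictly $f(\cdot,0)$.)

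The only delicate point — and the one step worth dwelling on — is the justification that $\lim_{t\to 0^+}\partial^2 u/\partial x_i\partial x_j(x,t)$ equals $\partial^2\phi_0/\partial x_i\partial x_j(x)$, i.e.\ that the boundary trace of the spatial Hessian of $u$ at $t=0$ coincides with the Hessian of the prescribed initial data $\phi_0$. This is immediate from the assumption $u \in C^{2,\bard}(\bar Q)$: continuity of $\partial^2 u/\partial x_i\partial x_j$ on the closed cylinder $\bar Q$ forces $\partial^2 u/\partial x_i\partial x_j(x,t) \to \partial^2 u/\partial x_i\partial x_j(x,0)$, and the latter is the $(i,j)$ second derivative of the function $x \mapsto u(x,0) = \phi_0(x)$ because differentiation in $x$ at the fixed level $t=0$ only sees the values of $u$ on the face $\bar\Omega \times \{0\}$. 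So there is no real obstacle; the proof is a clean limiting argument whose whole content is the combination of Corollary~\ref{cor:1} with the hypothesised continuity up to $t=0$. The conceptual punchline, worth stating as a remark afterward, is that $L_0\phi_0 = f(\cdot,0)$ is an overdetermination: $\phi_0$ is already prescribed, so this is a genuine constraint linking the initial data, the operator, and the source, which generic data will violate.
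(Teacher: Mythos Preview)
Your proof is correct and follows exactly the same approach as the paper: fix $x\in\Omega$, let $t\to 0^+$ in \eqref{proba}, invoke Corollary~\ref{cor:1} to eliminate the Caputo term, and use the assumed continuity on $\bar Q$ to pass to the limit in the remaining terms. The paper's version is simply terser; your additional justification of why the spatial derivatives at $t=0$ coincide with those of $\phi_0$ is sound but not strictly needed beyond the $C^{2,\bard}(\bar Q)$ hypothesis.
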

\begin{proof}
For each fixed $x\in\Omega$, consider the limit of   equation~\eqref{proba} as $t\to 0^+$.
Corollary~\ref{cor:1} gives  $\lim_{t\to 0^+}D_t^{\delta}u(x,t)=0$. Consequently we obtain $L_0u(x,0) = f(x,0)$, and this is true for each $x\in\Omega$.
\end{proof}

Theorem~\ref{th:main} shows that the assumption that the solution $u$ of \eqref{prob}  lies in $C^{2,\bard}(\bar Q)$ restricts the class of problems being studied because the initial condition $\phi_0$ cannot be chosen freely. The next example  illustrates the severity of this restriction.

\begin{example} \label{exa:2}
Consider the fractional heat equation   
\[
D_t^\delta v - \partial^2 v/\partial x^2 = 0\ \text{ for } (x,t)\in (0,\pi)\times(0,T]
\]
with $0<\delta<1$, boundary conditions $v(0,t) = v(\pi,t) =0$, and the initial condition $v(x,0) = \phi_0(x)$, where $\phi_0(x)$ is unspecified except that it satisfies the compatibility condition~\eqref{compatibility}. 

Suppose that we assume that the solution $v$ of this problem lies in $C^{2,1}(\bar Q)$. Then Theorem~\ref{th:main} and~\eqref{compatibility} show that $\phi_0$ must satisfy the conditions
\[
- \phi_0''(x) = 0 \ \text{ on } (0,\pi), \quad \phi_0(0) = \phi_0(\pi) = 0.
\]
These imply that $\phi_0 \equiv 0$. As all the data of this example are now zero, we get $v\equiv 0$. That is: imposing the arbitrary hypothesis that $v\in C^{2,1}(\bar Q)$ forces $v\equiv 0$.
\end{example}

\begin{remark}\label{rem:LinXuPaper}
The truncation error analysis in the widely-cited paper \cite{LX07} is carried out under the hypotheses of Example~\ref{exa:2}, to obtain the $(\Delta t)^2$ error term in \cite[(3.2)]{LX07}. This analysis is therefore valid only if the solution of the problem considered in~\cite[Section 3]{LX07} is identically zero. Nevertheless the \emph{stability} analysis of \cite[Section 3]{LX07} is unaffected by this observation,  and it can be combined with the truncation error analysis of~\cite{SORG16} to replace  $(\Delta t)^{2-\alpha}$ by $(\Delta t)^{\alpha}$ in the convergence results \cite[Theorem 3.2 (1) and Theorem 4.2 (1)]{LX07}---these new bounds hold true for functions whose derivatives behave as in Example~\ref{exa:1}. 
\end{remark}

In contrast to Example~\ref{exa:2}, Example~\ref{exa:1} exhibits a typical solution to the fractional heat equation: it lies in $C^{2,\bard}(Q)$ but not in $C^{2,\bard}(\bar Q)$.

One can generalise Example~\ref{exa:2} as follows. 

\begin{assumption}\label{Ass:1}
Assume that the boundary value problem
\begin{equation}\label{2ptbvp}
L_0w(x)=f(x,0) \ \text{ for }x\in\Omega, \quad  w(x,0)= \psi(x,0)\text{ for } x\in\partial\Omega
\end{equation}
has at most one solution $w(x)$.
\end{assumption}

Assumption~\ref{Ass:1} is satisfied if  $r(x,0)\ge 0$ for $x\in \Omega$, because then $L_0$ satisfies a maximum principle \cite[p.72]{PW84}. 

Alternatively, Assumption~\ref{Ass:1} is satisfied if, for example, $q\in C^1(\Omega)$ with
\[
p_{ij}(x,0)\equiv 1 \text{ for all }i,j \ \text{ and }\ r(x,0) - \frac12 \sum_{i=1}^n\frac{\partial q_i}{\partial x_i}(x,0) > 0\quad \text{for } x\in\Omega.
\]
For if $w_1$ and $w_2$ are two solutions of~\eqref{2ptbvp}, then  $L_0(w_1-w_2)=0$ on $\Omega$ and $w_1-w_2=0$ on $\partial\Omega$, so multiplying $L_0(w_1-w_2)=0$ by $w_1-w_2$ and integrating by parts over $\Omega$ yields 
\[
\int_\Omega \left[ |\nabla(w_1-w_2)|^2 + \left(r(x,0) - \frac12 \sum_{i=1}^n\frac{\partial q_i}{\partial x_i}(x,0)\right)(w_1-w_2)^2   \right]\,dx =0 
\]
which implies $w_1\equiv w_2$.

The next corollary prompted the title of our paper.

\begin{corollary}\label{cor:2}
Suppose that the solution $u$ of \eqref{prob}  lies in $C^{2,\bard}(\bar Q)$ and that Assumption~\ref{Ass:1} is satisfied. Then the initial value $\phi_0(x)$ of $u$ is determined uniquely by $L_0$ and $\psi$.
\end{corollary}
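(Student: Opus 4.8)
The plan is to deduce this almost immediately from Theorem~\ref{th:main}, the compatibility condition~\eqref{compatibility}, and Assumption~\ref{Ass:1}. First I would note that since $u\in C^{2,\bard}(\bar Q)$, its pure spatial derivatives $\partial u/\partial x_i$ and $\partial^2 u/\partial x_i\partial x_j$ are continuous on $\bar Q$; restricting to $t=0$ shows that $\phi_0(x)=u(x,0)$ belongs to $C^2(\Omega)$, so that it is a bona fide candidate solution of the elliptic boundary value problem~\eqref{2ptbvp} in the classical pointwise sense.

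Next I would apply Theorem~\ref{th:main}: under the standing hypothesis $u\in C^{2,\bard}(\bar Q)$ it yields $L_0\phi_0 = f$ on $\Omega$, where the right-hand side is read as the trace $f(\cdot,0)$, consistent with the way $L_0$ is defined from the coefficients frozen at $t=0$. Combining this with the compatibility condition~\eqref{compatibility}, which forces $\phi_0(x)=\psi(x,0)$ for $x\in\partial\Omega$, exhibits $\phi_0$ as a solution of~\eqref{2ptbvp}. Finally, Assumption~\ref{Ass:1} says that~\eqref{2ptbvp} has at most one solution, so this $\phi_0$ is \emph{the} solution of~\eqref{2ptbvp}; hence $\phi_0$ is completely pinned down by $L_0$ and the boundary data $\psi(\cdot,0)$ (together with $f(\cdot,0)$), with no remaining freedom.

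I do not expect any real obstacle: Corollary~\ref{cor:2} is a direct consequence of Theorem~\ref{th:main} once one observes that $\phi_0$ automatically inherits its boundary values from~\eqref{compatibility}. The only points deserving a little care are (i) checking that $C^{2,\bard}(\bar Q)$ regularity genuinely delivers $\phi_0\in C^2(\Omega)$, so that ``$\phi_0$ solves~\eqref{2ptbvp}'' is meaningful, and (ii) keeping every coefficient and the right-hand side evaluated at $t=0$ throughout, so that the equation satisfied by $\phi_0$ is literally~\eqref{2ptbvp} and Assumption~\ref{Ass:1} applies without modification.
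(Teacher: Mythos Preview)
Your proposal is correct and follows essentially the same route as the paper's own proof: invoke Theorem~\ref{th:main} to obtain $L_0\phi_0=f(\cdot,0)$ on~$\Omega$, use the compatibility condition~\eqref{compatibility} for the boundary values, and conclude uniqueness via Assumption~\ref{Ass:1}. The paper's version is terser and omits your explicit check that $\phi_0\in C^2(\Omega)$, but the argument is the same.
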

\begin{proof}
Theorem~\ref{th:main} shows that  $L_0\phi_0(x) = f(x,0)$ for all $x\in\Omega$. Furthermore, by~\eqref{compatibility} we have  $\phi_0(x)=\psi(x,0)$ for $x\in\partial\Omega$. The result now follows from Assumption~\ref{Ass:1}.
\end{proof}

It is clear that the conclusion of Corollary~\ref{cor:2} is unnatural---the initial condition \eqref{probc} should not be determined by the other data of the problem. This infelicity is caused by the unreasonable assumption that  $u\in C^{2,\bard}(\bar Q)$.

\begin{remark}
More general boundary conditions than those of \eqref{2ptbvp} can be considered by using \cite[p.70, Theorem 9]{PW84}.

When $n=1$ the classical spatial differentiation operator of \eqref{proba} can be replaced by a Caputo fractional derivative operator, using the associated maximum principle of \cite[Lemma 3.3]{AlR12a}.
\end{remark}

\section*{Acknowledgements}

The research of this author is supported in part by the National Natural Science Foundation of China under grant 91430216.


\end{document}